\numberwithin{equation}{section}
\newtheorem{theorem}{Theorem}[section]
\newtheorem{proposition}[theorem]{Proposition}
\newtheorem{Definition}{Definition}[section]
\newtheorem{Rem}{Remark}[section]
\newcommand\R{\mathbb{R}}
\newcommand\be{\begin{equation}}
\newcommand\ee{\end{equation}}
\newcommand\bea{\begin{eqnarray}}
\newcommand\eea{\end{eqnarray}}
\newcommand\beaa{\begin{eqnarray*}}
\newcommand\eeaa{\end{eqnarray*}}
\newcommand\bss{\begin{cases}}
\newcommand\ess{\end{cases}}
\newcommand\bR{\mathbb{R}}
\newcommand{\ld}{\lambda}
\newcommand\mI{{\mathcal{I}}}
\newcommand\mK{{\mathcal{E}}}
\begin{document}

\title[stability of traveling wave with nonlocal dispersal]
{Stability of traveling waves in non-cooperative systems with nonlocal dispersal of equal diffusivities}

\author[J.-S. Guo]{Jong-Shenq Guo}
\address[J.-S. Guo]{Department of Mathematics, Tamkang University, Tamsui, New Taipei City 251301, Taiwan}
\email{jsguo@mail.tku.edu.tw}


\author[M. Shimojo]{Masahiko Shimojo}
\address[M. Shimojo]{Department of Mathematical Sciences, Tokyo Metropolitan University, Hachioji, Tokyo 192-0397, Japan}
\email{shimojo@tmu.ac.jp}

\thanks{Date: \today. Corresponding author: M. Shimojo}

\thanks{This work was supported in part by the National Science and Technology Council of Taiwan under the grant 112-2115-M-032-001 (JSG) and
by JSPS KAKENHI Grant-in-Aid for Scientific Research (C) (No.~	24K06817).}

\thanks{{\em 2000 Mathematics Subject Classification.} 35K55, 35K57, 92D25, 92D40}

\thanks{{\em Key words and phrases.} Stability, traveling wave, nonlocal dispersal, non-cooperative model.}

\begin{abstract}
In this work, we first prove a stability theorem for traveling waves in a class of non-cooperative reaction-diffusion systems with nonlocal dispersal of equal diffusivities. 
Our stability criterion is in the sense that the initial perturbation is such that a suitable weighted
relative entropy function is bounded and integrable.
Then we apply our main theorem to derive the stability of traveling waves for some specific examples of non-cooperative systems arising in ecology and epidemiology.
\end{abstract}

\maketitle

\medskip

\section{Introduction}

It is well-known that, in contrast to the classical random diffusion, reaction-diffusion systems with nonlocal dispersal can better model
the long range movements and nonadjacent interactions of individuals.
This can be seen from many applied science models arising in physics, material science, population dynamics and so on (cf. \cite{hopf,fife, peter, Lutscher}).
In fact, when the dispersal kernel is highly concentrated, it is known that the model with nonlocal dispersal tends to the classical diffusion model.
Moreover, it is noted in \cite{al} that the dynamics of models with nonlocal dispersal is quite rich.
Therefore, the study of models with nonlocal dispersal has attracted a lot of attention in recent years.
However, there are certain difficulties arise in the study of nonlocal dispersal models.
One of them is that there is no regularizing effect for the nonlocal dispersal model in contrast to the classical diffusion case \cite{and,ir07}.

This work is devoted to the study of the following reaction-diffusion system with nonlocal dispersal:
\begin{equation}\label{RD}
(u_i)_t(x,t)=d_i\mathcal{N}_i[u_{i}](x,t)+u_i(x,t)f_i(u(x,t)),\;x\in\R,\, t>0,\; i=1,\dots,m,
\end{equation}
where $u:=(u_1,\dots,u_m)$, $d_i>0$, $f_i\in C^1([0,\infty)^m)$, $i=1,\dots,m$, $m$ is a positive integer and
\beaa
\mathcal{N}_i[u_i](x,t):=\int_R J_i(y)u_i(x-y,t)dy-u_i(x,t)=(J_i*u_i-u_i)(x,t),
\eeaa
in which the kernel function $J_i$ satisfies the following properties:
\begin{enumerate}
\item[(J1)] The kernel $J_i$ is nonnegative symmetric (w.r.t. $x=0$) and smooth in $\bR$;

\item[(J2)] it holds that
$$\int_{\mathbb{R}}J_i(y)dy=1;$$

\item[(J3)] it holds that $\int_{\mathbb{R}}J_i(y)e^{\lambda y} dy < \infty$ for all $\lambda \in (0, \hat\lambda_i)$ and
$$\int_{\mathbb{R}}J_i(y)e^{\lambda y} dy\to\infty\text{ as }\lambda \uparrow \hat\lambda_i$$
for some $\hat\lambda_i\in (0,\infty]$.
\end{enumerate}

System \eqref{RD} arises in many applications, such as population dynamics in ecology and epidemiology.
In population dynamics, we are particularly interested in the propagation phenomena of species in the biological model.
More precisely, it describes the invasion of certain species to other species in ecological systems (cf. \cite{fb00,ow01}),
or the spreading of certain diseases in epidemic models (cf. \cite{dot79,mk03}).
Among many different approaches towards the propagation phenomena, the existence vs non-existence of traveling waves and the spreading dynamics of solutions with
localized initial data are two most important subjects to be explored.
Although there are some well-known difficulties in the study of nonlocal dispersal models, some abstract theory from dynamical systems can be applied
to derive propagation properties for nonlocal models when it is of cooperative type.
For the theory and application of monotone semiflow to derive the minimal wave speed of traveling waves and the spreading speed,
we refer the reader to, e.g., \cite{ar1,wein,lui1989,weinberger,liang,ll08,jz09,fz14} and the references cited therein.

In this work, we are mainly concerned with traveling wave solutions of \eqref{RD} connecting
two {different constant equilibria $\{E^\pm\}$.} 
More precisely, a  traveling wave solution of \eqref{RD} is a solution in the form
\beaa\label{fw0}
u_i(x,t)=\phi_i(z),\; z:=x-ct,\; i=1,\dots,m,
\eeaa
with unknown function $\Phi:=(\phi_1,\dots,\phi_m)$ (the wave profile)
and unknown {positive constant $c$ (the wave speed)} satisfying
\be\label{bc}
{\Phi(-\infty)=E^-,\;\Phi(\infty)=E^+.}
\ee
The existence vs non-existence of traveling waves for systems with nonlocal dispersal has been
studied quite extensively in past years. 
We refer the reader to, e.g.,
\cite{coville08, coville13} for scalar equations, \cite{zlw12,zll09,dg19,ylw20,ylw21,zyl22,tyj} for predator-prey systems
and \cite{yllw13,ly14,lly14,ylw15,yl18} for epidemic models.

The main goal of this work is to derive the stability of traveling wave solutions for system \eqref{RD}.
Therefore, we shall always assume that \eqref{RD} admits positive traveling wave solutions 
$\{c,\Phi\}$ for all $c\ge c^*$ for some positive constant $c^*$. 
Hereafter $\Phi$ is positive means $\phi_i>0$ in $\R$ for all $i$.
The stability of traveling wave solutions in cooperative systems with nonlocal dispersal can be derived by sandwich method using the order-preserving property
of the cooperative systems. See, e.g., \cite{yp18}.
For non-cooperative systems with nonlocal dispersal, due to the lack of comparison principle, little is done for the stability of traveling waves.
In this work, motivated by \cite{gs23}, we provide a simple approach to tackle the stability of traveling waves for non-cooperative systems
with nonlocal dispersal.
Due to some technical difficulty, we shall only consider the equal diffusivities case in this work.
Hereafter, we shall assume that $d_i=1$ and $J_i=J$, $i=1,\cdots,m$, for some kernel $J$ satisfying (J1)-(J3) with $\hat\lambda\in(0,\infty]$.
Also, we set $\mathcal{N}[u_i]:=J*u_i-u_i$.

To study the stability of traveling waves, it is more convenient to use the so-called 
moving coordinate $z=x-ct$. Hence, for a positive solution $u$ of \eqref{RD}, 
the corresponding function $\{u_i=u_i(z,t)\}$ in terms of $z$-coordinate satisfies
\be\label{RDz}
(u_i)_t=\mathcal{N}[u_i]+c(u_i)_z +u_i f_i(u),\;z\in\R,\, t>0,\; i=1,\dots,m.
\ee
Note that a traveling wave $\{c,\Phi\}$ satisfies
\be\label{TWS}
\mathcal{N}[\phi_{i}]+c\phi_i'(z)+\phi_if_i(\Phi(z))=0,\;z\in\R,\; i=1,\dots,m,
\ee
hence $\Phi$ is a stationary solution of \eqref{RDz} for the given wave speed $c$.

For a given set of positive constant $\{\sigma_i\mid 1\le i\le m\}$ and 
a positive function $\Psi(z)=(\psi_1(z),\dots,\psi_m(z))$, $z\in\R$, 
we define as in \cite{gs23} the following relative entropy function
\be\label{mK}
\mK[\Psi](z):=\sum_{i=1}^m \sigma_i \mK_{i}[\psi_i](z),\quad 
\mK_{i}[\psi_{i}](z):=\psi_i(z)-\phi_i(z)-\phi_i(z)\ln\frac{\psi_i(z)}{\phi_i(z)}.
\ee
It is easy to see that $\mK[\Psi]\ge 0$ in $\R$ and $\mK[\Psi](z_0)=0$ if and only if $\Psi(z_0)=\Phi(z_0)$.
In particular, we denote the relative entropy function of $u(\cdot,t)$ by
\be\label{en}
W(z,t):=\mK[u(\cdot,t)](z),\; z\in\R,\, t>0.
\ee
Also, for a given positive constant $R$, we define the quantity
\be\label{cR}
c_R:=\inf_{0<\lambda<\hat\lambda}G(\lambda),\quad G(\lambda):=\frac{\left[ \int_{\mathbb{R}}J(y)e^{\lambda y}dy-1\right] + R}{\lambda }.
\ee
Note that the quantity $c_R$ is well-defined such that $c_R>0$.
This can be easily seen by the fact that $G(0^+)=G({\hat\lambda}^-)=\infty$ and $G(\lambda)>0$ for $\lambda\in(0,\hat\lambda)$.

With the above notation, we now state our main theorem on the stability of traveling waves
for system \eqref{RD} as follows.

\begin{theorem}\label{th:general}
{Let $R$ be a positive constant such that $c_R\ge c^*$,
$\{c,\Phi\}$ be a positive traveling wave solution of \eqref{RD} for some $c\ge c_R$,
and $u$ be a solution of \eqref{RDz} with positive continuous initial data $u_0$ at $t=0$.}
Suppose that there exists a set of positive constant $\{\sigma_i\mid 1\le i\le m\}$ such that
\be\label{sub}
W_t \le \mathcal{N}[W]+cW_z + RW,\; z\in\R,\, t>0,
\ee
for the relative entropy function $W$ of $u$ defined by \eqref{en}.
{Let $\lambda_{c}<\hat\lambda$ be the smallest positive root of $G(\lambda)=c$.}
Then, under the condition $e^{\lambda_{c} z}\mK[u_0]\in L^1(\R)\cap L^\infty(\R)$, $u(z,t)\to \Phi(z)$ as $t\to\infty$ locally uniformly for $z\in\R$.
\end{theorem}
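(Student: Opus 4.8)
The plan is to exploit the fact that hypothesis \eqref{sub} already reduces the entire problem to a single \emph{linear} scalar nonlocal equation: $W$ is a nonnegative subsolution of $v_t=\mathcal N[v]+cv_z+Rv$, and $W$ measures the deviation of $u$ from $\Phi$. Accordingly I would split the argument into two parts: (i) show that any nonnegative subsolution of this linear equation whose initial datum satisfies the stated weighted integrability tends to $0$ locally uniformly; (ii) recover $u(\cdot,t)\to\Phi$ from $W(\cdot,t)\to0$ via the coercivity of the relative entropy. (One tacitly needs $u(z,t)>0$ for all $t>0$ so that $W$ is well defined; this follows from the structure of \eqref{RDz}, since the nonlocal term $\mathcal N[u_i]$ is nonnegative and the remaining part yields an exponential lower bound for $u_i$ along the characteristics $z+ct=\text{const}$.)

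For (i) the first move is to eliminate the zeroth-order term $Rv$. I would set $W(z,t)=e^{-\lambda_c z}V(z,t)$ and pass to the moving frame $\xi=z+ct$, writing $\hat V(\xi,t):=V(\xi-ct,t)$. Using the symmetry of $J$ and the identity $G(\lambda_c)=c$, which reads $\kappa-1+R=c\lambda_c$ with $\kappa:=\int_\R J(y)e^{\lambda_c y}\,dy$, a short computation converts \eqref{sub} into the \emph{conservative} nonlocal diffusion inequality
\[
\hat V_t\le\kappa\,\bigl(J_{\lambda_c}*\hat V-\hat V\bigr),\qquad J_{\lambda_c}(y):=\frac{J(y)e^{\lambda_c y}}{\kappa},\ \ \int_\R J_{\lambda_c}(y)\,dy=1 ,
\]
with $\hat V\ge0$ and, by hypothesis, $\hat V(z,0)=e^{\lambda_c z}\mK[u_0](z)\in L^1(\R)\cap L^\infty(\R)$. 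The operator $v\mapsto\kappa(J_{\lambda_c}*v-v)$ is bounded on $L^\infty(\R)$ and generates the positivity-preserving semigroup $e^{-\kappa t}\sum_{n\ge0}\frac{(\kappa t)^n}{n!}J_{\lambda_c}^{*n}*(\cdot)$, which is a contraction on $L^1$ and on $L^\infty$; hence a standard comparison argument yields
\[
0\le\hat V(\xi,t)\le e^{-\kappa t}\|\hat V(\cdot,0)\|_{L^\infty}+\|\hat V(\cdot,0)\|_{L^1}\,e^{-\kappa t}\sum_{n=1}^\infty\frac{(\kappa t)^n}{n!}\,\|J_{\lambda_c}^{*n}\|_{L^\infty}
\]
(both parts of the hypothesis are used here: $L^\infty$ for the first term, $L^1$ for the second).

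The first term tends to $0$ as $t\to\infty$. For the second, set $a_n:=\|J_{\lambda_c}^{*n}\|_{L^\infty}$; since $\|J_{\lambda_c}\|_{L^1}=1$ the sequence $a_n$ is nonincreasing, and the crux of the proof is that $a_n\to0$. This is precisely the step with no free counterpart in the nonlocal setting — there is no $t^{-1/2}$ parabolic smoothing — but since $J_{\lambda_c}$ is an absolutely continuous probability density with finite variance (by (J3)), the decay $a_n\to0$ follows from a local central limit theorem, or more elementarily by Fourier inversion together with $|\widehat{J_{\lambda_c}}(\eta)|<1$ for $\eta\neq0$ and dominated convergence. Granting $a_n\to0$, the Poisson-weighted sum is bounded by $a_1\,\mathbb P(N<n_0)+a_{n_0}$ for $N\sim\mathrm{Poisson}(\kappa t)$ and any $n_0$, so letting $t\to\infty$ and then $n_0\to\infty$ makes it vanish. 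Consequently $\sup_z V(z,t)\to0$, and since $W(z,t)=e^{-\lambda_c z}V(z,t)$ with $e^{-\lambda_c z}$ bounded on each interval $[-L,L]$, we get $W(\cdot,t)\to0$ locally uniformly.

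Finally, for (ii): from $\mK[\Psi]\ge0$ componentwise we get $\mK_i[u_i(\cdot,t)]\le W(\cdot,t)/\sigma_i\to0$ locally uniformly. Writing $\mK_i[\psi_i]=\phi_i\,g(\psi_i/\phi_i)$ with $g(s)=s-1-\ln s$ — nonnegative, strictly convex, vanishing only at $s=1$, and $g(s)\to+\infty$ as $s\to0^+$ or $s\to+\infty$ — and using that each $\phi_i$ is continuous and positive, hence bounded above and below by positive constants on any $[-L,L]$, the smallness of $\mK_i[u_i]$ forces $u_i(z,t)/\phi_i(z)\to1$, hence $u(z,t)\to\Phi(z)$, uniformly on $[-L,L]$. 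I expect the decay $\|J_{\lambda_c}^{*n}\|_{L^\infty}\to0$ — equivalently, the $L^\infty$-dissipation produced by the conservative nonlocal diffusion — to be the main obstacle; the change of unknown, the comparison principle and the entropy coercivity are all routine.
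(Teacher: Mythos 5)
Your proposal is correct and follows the paper's strategy up to the key decay step: the change of unknown $V=e^{\lambda_c z}W$, the use of $G(\lambda_c)=c$ (equivalently $\kappa-1+R=c\lambda_c$), and the passage to the frame $\xi=z+ct$ reproduce exactly the paper's reduction of \eqref{sub} to the scalar inequality \eqref{U-tau} (your $\hat V_t\le\kappa(J_{\lambda_c}*\hat V-\hat V)$ is the paper's equation before its cosmetic time rescaling $\tau=\kappa t$). The genuine difference is how the linear decay is obtained: the paper invokes the Ignat--Rossi estimate $\|v(\cdot,t)\|_{L^\infty}\le C(1+t)^{-1/2}$ for $L^1\cap L^\infty$ data (Proposition~\ref{key1}, \cite{ir07}) together with comparison, whereas you prove the needed convergence to zero (without a rate, which suffices) self-containedly, via the explicit Poisson-series semigroup $e^{-\kappa t}\sum_n\frac{(\kappa t)^n}{n!}J_{\lambda_c}^{*n}*(\cdot)$, Young's inequality, the monotone decay $\|J_{\lambda_c}^{*n}\|_{L^\infty}\downarrow 0$, and Poisson concentration. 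This buys two things: it avoids importing a quantitative theorem whose hypotheses must be checked for the tilted kernel, and it is insensitive to the fact that $J(y)e^{\lambda_c y}/\kappa$ is \emph{not} symmetric (the paper's remark that the kernel in \eqref{U-tau} "satisfies (J1)--(J3)" is literally inaccurate on this point, though harmless for the cited decay result); you also make explicit the final coercivity step ($W\to0$ locally uniformly implies $u\to\Phi$ via $g(s)=s-1-\ln s$ and the local positive bounds on $\phi_i$), which the paper leaves implicit. Two caveats, both minor and shared in spirit with the paper's own gloss: your Fourier/local-CLT argument for $\|J_{\lambda_c}^{*n}\|_{L^\infty}\to0$ needs $|\widehat{J_{\lambda_c}}|^{n_0}\in L^1$ for some $n_0$ (e.g. $J_{\lambda_c}\in L^2$, which holds under the boundedness of $J e^{\lambda_c\cdot}$ implicitly assumed when applying \cite{ir07} as well), and the comparison of $\hat V$ with the semigroup solution requires an a priori statement that the weighted entropy stays in a class (say, locally bounded in time) where the comparison principle for this bounded nonlocal operator is valid --- a point the paper also passes over silently.
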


The stability provided in Theorem~\ref{th:general} for traveling wave of \eqref{RD} is in the sense
that the initial perturbation is such that $e^{\ld_c z}\mathcal{E}[u_0]\in L^1(\R)\cap L^\infty(\R)$ at $t=0$.
Although the proof of Theorem~\ref{th:general} follows from the same idea as that in \cite{gs23} for the case of standard local diffusion,
there are some difficulties to be overcome due to the nonlocal dispersal.
With a suitable weight, thanks to a delicate estimate derived in \cite{ir07} (see Proposition~\ref{key1} below),
we are able to derive the convergence of the weighted relative entropy function to zero as $t\to\infty$.
Another key of the proof of this stability theorem is to derive the inequality \eqref{sub} with
an appropriate chosen set of positive constants $\{\sigma_i\}$.
Unfortunately, as in the case of classical diffusion, our method can only be applied for systems of nonlocal dispersal with equal diffusivities.
Systems with non-equal diffusivities are still left for open.

The rest of this paper is organized as follows. First, we provide a very simple proof of Theorem~\ref{th:general} along with a general calculation towards \eqref{sub} in \S2.
Then, in \S3, we provide an application of Theorem~\ref{th:general} to various non-cooperative systems studied in
\cite{dg19,ylw21,zyl22,tyj} for predator-prey systems and {in \cite{yllw13,yl18} for an epidemic model.}

\section{Proof of Theorem~\ref{th:general}}

First, we consider the linear problem
\be\label{linear}
\bss
v_t(x,t)=\mathcal{N}[v](x,t),\; x\in\bR,\, t>0,\\
v(x,0)=v_0(x),\; x\in\bR.
\ess
\ee
Recall from \cite[Theorem 1.3]{ir07} that

\begin{proposition}\label{key1}
Let {\rm (J1)-(J3)} be enforced for the kernel function $J$.
Suppose that $v_0\in L^1(\bR)\cap L^\infty(\bR)$. Then the solution $v$ of \eqref{linear} satisfies
the decay estimate
\be\label{est1}
\|v(\cdot,t)\|_{L^\infty(\bR)}\le C(1+t)^{-1/2},\; t\ge 0,
\ee
for some positive constant $C$ depending only on $\|v_0\|_{L^1(\bR)}$ and $\|v_0\|_{L^\infty(\bR)}$.
\end{proposition}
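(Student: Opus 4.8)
The plan is to solve the linear problem \eqref{linear} explicitly on the Fourier side and to extract the claimed decay from the low-frequency (heat-kernel-like) behaviour of the symbol $\hat J(\xi)-1$, treating the high-frequency part as an exponentially small remainder. Note first that $\|v(\cdot,t)\|_{L^\infty(\R)}\le\|v_0\|_{L^\infty(\R)}$ for all $t$: writing $v(\cdot,t)=e^{-t}\sum_{n\ge0}\frac{t^n}{n!}J^{*n}*v_0$ (with $J^{*0}=\delta_0$) and using $\|J^{*n}\|_{L^1}=1$ gives the bound by Young's inequality. Hence the estimate is trivial for $0\le t\le1$, and I may assume $t\ge1$. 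Taking Fourier transforms, $\hat v(\xi,t)=e^{(\hat J(\xi)-1)t}\hat v_0(\xi)$. I would then split off the atomic part of the fundamental solution: writing $S_t=e^{-t}\delta_0+K_t$ with $\widehat{K_t}(\xi)=e^{-t}\big(e^{\hat J(\xi)t}-1\big)$, one has $v(\cdot,t)=e^{-t}v_0+K_t*v_0$, and therefore, by Fourier inversion and $\|\hat v_0\|_{L^\infty}\le\|v_0\|_{L^1}$,
\[
\|v(\cdot,t)\|_{L^\infty(\R)}\le e^{-t}\|v_0\|_{L^\infty(\R)}+\|v_0\|_{L^1(\R)}\,\|K_t\|_{L^\infty(\R)},\qquad \|K_t\|_{L^\infty(\R)}\le\frac{1}{2\pi}\int_{\R}\big|e^{(\hat J(\xi)-1)t}-e^{-t}\big|\,d\xi .
\]
Everything then reduces to an $L^1_\xi$ bound on $\widehat{K_t}$, which I would obtain by splitting the integral at $|\xi|=1$.

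For the low frequencies $|\xi|\le1$ I would use that $\hat J$ is real (symmetry of $J$), that $\hat J(0)=1$, and that $\hat J''(0)=-\int_{\R}y^2J(y)\,dy<0$, the integral being finite by (J3) and positive since $J$ is a genuine smooth density; combined with $\hat J(\xi)<1$ for $\xi\neq0$ and a compactness argument on $\delta\le|\xi|\le1$, this yields $\hat J(\xi)-1\le-c_0\xi^2$ on $[-1,1]$ for some $c_0>0$. Hence
\[
\int_{|\xi|\le1}\big|e^{(\hat J(\xi)-1)t}-e^{-t}\big|\,d\xi\le\int_{\R}e^{-c_0\xi^2 t}\,d\xi+2e^{-t}=\sqrt{\tfrac{\pi}{c_0 t}}+2e^{-t}\le C(1+t)^{-1/2},
\]
which is exactly the heat-kernel-type contribution responsible for the exponent $-1/2$.

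The genuinely delicate part is the high-frequency regime $|\xi|>1$, and this is where the absence of smoothing for the nonlocal operator makes itself felt: the symbol $\hat J(\xi)-1$ does not tend to $-\infty$ (it tends to $-1$), so $e^{(\hat J(\xi)-1)t}$ does not decay in $\xi$ and a naive estimate produces a divergent integral. Instead I would exploit that the atom has been removed: using $|e^{w}-1|\le|w|e^{|w|}$ with $w=\hat J(\xi)t\in\R$, and setting $\rho:=\sup_{|\xi|\ge1}|\hat J(\xi)|<1$ — finite and strictly less than $1$ because $|\hat J|<1$ off the origin and $\hat J(\xi)\to0$ as $|\xi|\to\infty$ by Riemann--Lebesgue — one gets $|\widehat{K_t}(\xi)|\le t\,|\hat J(\xi)|\,e^{-(1-\rho)t}$ for $|\xi|\ge1$. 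Here the smoothness of $J$ in (J1), together with its fast decay from (J3), enters decisively: it gives enough regularity (e.g. $J\in W^{2,1}(\R)$, via two integrations by parts) to yield $\hat J(\xi)=O(|\xi|^{-2})$, so that $\hat J\in L^1(\{|\xi|\ge1\})$. Consequently
\[
\int_{|\xi|>1}\big|e^{(\hat J(\xi)-1)t}-e^{-t}\big|\,d\xi\le t\,e^{-(1-\rho)t}\,\|\hat J\|_{L^1(\{|\xi|\ge1\})}\le C(1+t)^{-1/2}.
\]

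Adding the two regimes, inserting the resulting bound for $\|K_t\|_{L^\infty}$ into the displayed inequality for $\|v(\cdot,t)\|_{L^\infty}$, and recalling that the case $t\le1$ is trivial, completes the proof, with $C$ depending only on $\|v_0\|_{L^1(\R)}$ and $\|v_0\|_{L^\infty(\R)}$. I expect the high-frequency step to be the only real obstacle: substituting regularity of the kernel for the missing parabolic smoothing is precisely where the structural hypotheses on $J$ must be used, and it is the reason the sharp statement is borrowed from \cite{ir07} rather than reproved in detail. (One can equally argue probabilistically: $v(\cdot,t)$ is the average of $v_0$ under a compound Poisson flow with jump density $J$, $\|K_t\|_{L^\infty}\le e^{-t}\sum_{n\ge1}\frac{t^n}{n!}\|J^{*n}\|_{L^\infty}$, and a local--central-limit bound $\|J^{*n}\|_{L^\infty}\lesssim n^{-1/2}$ together with a Chernoff estimate on the Poisson number of jumps gives the same $t^{-1/2}$ rate.)
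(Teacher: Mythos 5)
Your argument is a self-contained Fourier reconstruction of the result, which the paper itself does not prove: Proposition~\ref{key1} is simply quoted from \cite[Theorem 1.3]{ir07}. Your route (write the fundamental solution as $e^{-t}\delta_0+K_t$, reduce \eqref{est1} to an $L^1_\xi$ bound on $\widehat{K_t}(\xi)=e^{-t}(e^{\hat J(\xi)t}-1)$, and split at $|\xi|=1$) is exactly the standard argument behind that reference. The low-frequency half is sound: (J3) with $\hat\lambda>0$ plus symmetry gives a finite, strictly positive second moment, hence $\hat J(\xi)-1\le -c_0\xi^2$ on $[-1,1]$ after the compactness argument, and this produces the $t^{-1/2}$ rate; also $\rho=\sup_{|\xi|\ge 1}|\hat J(\xi)|<1$ is correctly justified from $|\hat J|<1$ off the origin and Riemann--Lebesgue, and the reduction $\|v(\cdot,t)\|_{L^\infty}\le e^{-t}\|v_0\|_{L^\infty}+\|v_0\|_{L^1}\|K_t\|_{L^\infty}$ is fine.

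The genuine gap is in the high-frequency step, at precisely the point you yourself flag as delicate. You claim that (J1) together with (J3) yields $J\in W^{2,1}(\R)$, hence $\hat J(\xi)=O(|\xi|^{-2})$ and $\hat J\in L^1(\{|\xi|\ge 1\})$. This is a non sequitur: smoothness of $J$ gives no control whatsoever on the $L^1$ norms of $J'$ and $J''$, and (J3) constrains only the decay of $J$ itself, not of its derivatives. A symmetric, smooth, exponentially decaying density whose tails oscillate with rapidly increasing frequency satisfies (J1)--(J3) while its Fourier transform decays too slowly to be integrable at infinity, and then the bound $|\widehat{K_t}(\xi)|\le t\,|\hat J(\xi)|\,e^{-(1-\rho)t}$ gives nothing. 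So some additional kernel hypothesis (e.g.\ $J',J''\in L^1$, or $\hat J\in L^1$, or compact support of the smooth $J$ as is assumed in parts of Section 3, which makes $\hat J$ rapidly decreasing) must be added to close your argument; the same issue infects your probabilistic variant, since the local-CLT bound $\|J^{*n}\|_{L^\infty}\lesssim n^{-1/2}$ again requires something like $J\in L^2$ or $\hat J\in L^2$, which (J1)--(J3) do not literally provide. This extra decay/regularity of $\hat J$ is part of the standing hypotheses under which the quoted theorem in \cite{ir07} is proved, which is why the paper cites it rather than reproving it; as written, your proof silently imports that assumption, and you should either state it explicitly or derive the needed integrability from hypotheses the paper actually makes.
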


\begin{proof}[Proof of Theorem~\ref{th:general}]
Note that $w(z)=e^{-\lambda_{c} z}$ satisfies
\[
\mathcal{N}[w]+cw_z + Rw =0,\; z\in \R
\]
Define a function $V(z,t)= e^{\lambda_{c} z}W(z,t)$. Then, by \eqref{sub}, we obtain
\begin{align*}
V_{t}(z,t)&\le \int_{\mathbb{R}} J(y)e^{\lambda_{c} y}
V(z-y,t)\,dy +(R-1 -c \lambda_{c})V(z,t) +cV_{z}(z,t)\\
&=
\int_{\mathbb{R}} J(y)e^{\lambda_{c} y}
\{V(z-y,t)-V(z,t)\}\,dy  +cV_{z}(z,t).
\end{align*}
If we define a function $U(z+ct,t)=V(z,t)$, then
\[
U_{t}(x,t)\le
\int_{\mathbb{R}}
J(y)e^{\lambda_{c}y}
\{U(x-y,t)-U(x,t)  \}\,dy,\qquad x=z+ct.
\]

Now we introduce a new time variable $\tau$ by the relation
\[
\frac{\tau}{t}:=\int_{\mathbb{R}}J(y)e^{\lambda_{c}y}\,dy \in(0,\infty)
\]
to obtain
\be\label{U-tau}
U_{\tau}\le \frac{J(y)e^{\lambda_{c}y}}{\|J(y)e^{\lambda_{c}y}\|_{L^{1}}} * U  -U.
\ee
Note that the kernel function in \eqref{U-tau} satisfies (J1)-(J3).
Hence the comparison principle for scalar equations and \eqref{est1} imply that $U(x,\tau)\to 0$ as $\tau\to\infty$ uniformly for $x\in\R$.
Returning to the original variables $(z,t)$, Theorem~\ref{th:general} is thereby proved.
\end{proof}

{In order to apply Theorem~\ref{th:general} to some specific systems,
we first assume that system \eqref{RD} has {an invariant} set $\mI\subset [0,\infty)^m$.
Note that any nonnegative nontrivial solution of \eqref{RD} is positive, by applying the strong maximum principle for scalar equation.}

Secondly, we assume that 
\begin{equation}\label{ci}
I:=\sum_{i=1}^{m} \sigma_{i}\left(u_{i}-\phi_{i}\right) \{f_i(u)-f_i(\Phi)\} \le 0\quad\mbox{ for }\, u,\Phi\in \mI,
\end{equation}
for a given set of positive constants $\{\sigma_i\}$.
Following \cite{gs23}, we now perform a derivation of \eqref{sub} as follows.
By the definition of $W$ and using \eqref{RDz}, we have
\beaa
W_{z}&=&\sum_{i=1}^{m}\sigma_{i}\Big\{ (u_{i})_{z} \Big(
1-\frac{\phi_{i}}{u_{i}}\Big)-\phi_{i}' \ln \frac{u_{i}}{\phi_{i}}\Big\}\\
W_{t}&=&\sum_{i=1}^{m}\sigma_{i}
\Big(1-\frac{\phi_{i}}{u_{i}}\Big)(u_{i})_{t}\\
&=&\sum_{i=1}^{m}\sigma_{i}
\Big\{(u_{i}-\phi_{i})f_i(u)+\frac{u_{i}-\phi_{i}}{u_{i}}
\mathcal{N}[u_{i}] +\Big(1-\frac{\phi_{i}}{u_{i}}\Big)c (u_{i})_{z} \Big\}\\
&=&\sum_{i=1}^{m}\sigma_{i}
\Big\{(u_{i}-\phi_{i})(
f_i(u)-f_i(\Phi) )
+\frac{u_{i}-\phi_{i}}{u_{i}}
\mathcal{N}[u_{i}]+(u_{i}-\phi_{i})f_i(\Phi)\Big\}\\
&&\qquad\quad + c\sum_{i=1}^{m}\sigma_{i}{ \phi_{i}' \ln \frac{u_{i}}{\phi_{i}} }+cW_{z}.
\eeaa
Set $W_{i}=\mathcal{E}_{i}[u_{i}]$. Then
$\mathcal{N}[W_i]=\mathcal{N}[u_{i}]-\mathcal{N} [\phi_{i} ]-\mathcal{N}[\phi_{i} \ln{(u/\phi_{i})}]$.
Hence we obtain
\begin{eqnarray*}
&&W_{t}-\mathcal{N}[W]-cW_{z}\\
&=&\sum_{i=1}^{m}\sigma_{i}
\Big\{
(u_{i}-\phi_{i})(f_i(u)-f_i(\Phi))
-\frac{\phi_{i}}{u_{i}}\mathcal{N}[u_{i}]+f_i(\Phi)(u_{i}-\phi_{i})\\
&&\qquad\quad +\mathcal{N}[\phi_{i}]+\mathcal{N}[\phi_{i} \ln ({u_{i}}/{\phi_{i}})]
+ c \phi_{i}' \ln \frac{u_{i}}{\phi_{i}}\Big\}\\
&=&\sum_{i=1}^{m}\sigma_{i}
\Big\{(u_{i}-\phi_{i})(f_i(u)-f_i(\Phi))-\frac{\phi_{i}}{u_{i}}\mathcal{N}[u_{i}]+f_i(\Phi)
\Big(W_{i}+\phi_{i} \ln \frac{u_{i}}{\phi_{i}}\Big)\\
&&\qquad\quad +\mathcal{N}[\phi_{i}]+\mathcal{N}[\phi_{i} \ln ({u_{i}}/{\phi_{i}})]
+ c \phi_{i}' \ln \frac{u_{i}}{\phi_{i}} \Big\}.
\end{eqnarray*}

By substituting
\begin{align*}
-\frac{\phi_{i}}{u_{i}}\mathcal{N}[u_{i}]
&=-\frac{\phi_{i}}{u_{i}}J*u_{i}+\phi_{i},\\
\mathcal{N}\phi_{i}&=J*\phi_{i} -\phi_{i},\\
f_i(\Phi)\phi_{i}\ln \frac{u_{i}}{\phi_{i}}
&=-(J*\phi_{i} -\phi_{i} +c\phi_{i}' )\ln \frac{u_{i}}{\phi_{i}},\\
\mathcal{N}[\phi_{i} \ln (u_{i}/\phi_{i})]&
=J*(\phi_{i} \ln (u_{i}/\phi_{i}))-\phi_{i} \ln \frac{u_{i}}{\phi_{i}},
\end{align*}
{and using \eqref{ci}, we deduce} that
\beaa
&&W_{t}-\mathcal{N}[W]-cW_{z}- \sum_{i=1}^{m}{\sigma_i} f_i(\Phi) W_{i} \\
&\le&\sum_{i=1}^{m}\sigma_{i}
\Big\{-\frac{\phi_{i}}{u_{i}}J*u_{i}+J*\phi_{i}
-(J*\phi_{i})\ln \frac{u_{i}}{\phi_{i}}
+J* \Big(\phi_{i} \ln \frac{u_{i}}{\phi_{i}} \Big)\Big\}.
\eeaa
We further compute that
\beaa
&&-\frac{\phi_{i}}{u_{i}}J*u_{i}+J*\phi_{i}
-(J*\phi_{i})\ln \frac{u_{i}}{\phi_{i}}
+J* \Big(\phi_{i} \ln \frac{u_{i}}{\phi_{i}} \Big)\\
&=&\int_{\mathbb{R}}J(z-y)\Big\{
\phi_{i}(y)-\frac{u_{i}(y,t)}{u_{i}(z,t)}\phi_{i}(z)
-\phi_{i}(y)\ln \frac{u_{i}(z,t)}{\phi_{i}(z)}
+\phi_{i}(y) \ln \frac{u_{i}(y,t)}{\phi_{i}(y)}
\Big\}\,dy\\
&=&
\int_{\mathbb{R}}J(z-y)
\Big\{\phi_{i}(y)-\frac{u_{i}(y,t)}{u_{i}(z,t)}\phi_{i}(z)
+\phi_{i}(y) \ln \frac{u_{i}(y,t)\phi_{i}(z)}{u_{i}(z,t)\phi_{i}(y)}
\Big\}\,dy\\
&\le&
\int_{\mathbb{R}}J(z-y)
\Big\{\phi_{i}(y)-\frac{u_{i}(y,t)}{u_{i}(z,t)}\phi_{i}(z)
+\phi_{i}(y) \Big(\frac{u_{i}(y,t)\phi_{i}(z)}{u_{i}(z,t)\phi_{i}(y)}
-1\Big)\Big\}\,dy=0.
\eeaa
Here we used the inequality $\ln X \le X-1$ for $X>0$, by setting
$$X=\frac{u(y,t)\phi_{i}(z)}{u(z,t)\phi_{i}(y)}.$$

Finally, if we also have
\begin{equation*}
\max_{1 \le i \le m} \{ \|f_i(\Phi)\|_{L^\infty(\R)}\}\le R,
\end{equation*}
then we can conclude from \eqref{ci} that \eqref{sub} holds with this $R$ and
so Theorem~\ref{th:general} is applicalbe.

\section{Application of Theorem~\ref{th:general}}
\setcounter{equation}{0}

In this section, we apply Theorem~\ref{th:general} to derive the {stability of traveling waves in some non-cooperative systems arising
in ecology and epidemiology

\subsection{Predator-prey models}\hfil\\


First, in \cite{dg19}, we consider
\begin{equation}\label{pp}
\begin{cases}
(u_1)_t(x,t)=\mathcal N[u_1](x,t)+r_1u_1(x,t)[1-u_1(x,t)-au_2(x,t)],\; x\in\R,\, t>0,\\
(u_2)_t(x,t)=\mathcal N[u_2](x,t)+r_2u_2(x,t)[-1+bu_1(x,t)-u_2(x,t)],\; x\in\R,\, t>0,
\end{cases}
\end{equation}
where $r_1,r_2,a,b$ are positive constants.
We assume
\begin{equation}\label{constant}
b>1,\;\;  ab<1.
\end{equation}
Then the quantity
\begin{equation}\label{cstar}
c^{\ast }:=\inf_{0<\lambda<\hat\lambda}\frac{\left[ \int_{\mathbb{R}}J(y)e^{\lambda y}dy-1\right] + r_2(b-1)}{\lambda }
\end{equation}
is well-defined and $c^*>0$.

For the existence of traveling waves, we recall from \cite{dg19,w21} that
system \eqref{pp} admits a traveling wave solution $\{c,(\phi_1,\phi_2)\}$ satisfying \eqref{bc} with
\beaa
E^+=(1,0),\quad E^-=\left(\frac{1+a}{1+ab},\frac{b-1}{1+ab}\right),
\eeaa
for any $c>c^\ast$; while such a traveling wave exists for $c=c^*$ if we further assume that $J$ is compactly supported.



Note that, by the comparison for the scalar equation, $\mathcal{I}:=[0,1]\times[0,b-1]$ is an invariant set of system \eqref{pp}.
We choose $\sigma_1=1/r_1$ and $\sigma_2=a/(r_2b)$. Then the quantity $I$ in \eqref{ci} is computed as
\beaa
I=-(u_1-\phi_1)^2-\frac{a}{b}(u_2-\phi_2)^2\le 0,\;\forall\, (u_1,u_2),(\phi_1,\phi_2)\in\mathcal{I}.
\eeaa
Hence \eqref{sub} holds with $R:=\max\{r_1,r_2(b-1)\}$.
We conclude from Theorem~\ref{th:general} that the stability of traveling waves for \eqref{pp} in the sense described in Theorem~\ref{th:general}
holds for all $c\ge c_{R}$. It holds for all $c\ge c^*$, if we further assume that $r_1\le r_2(b-1)$.

Secondly, for the predator-prey system with two weak competing predators $u_1,u_2$ and one prey $u_3$ with the nonlinearities in \eqref{RD} defined by
\beaa
\bss
f_1(u_1,u_2,u_3)=r_1(-1-u_1-hu_2+bu_3),\\
f_2(u_1,u_2,u_3)=r_2(-1-ku_1-u_2+bu_3),\\
f_3(u_1,u_2,u_3)=r_3(1-au_1-au_2-u_3),
\ess
\eeaa
where $r_1,r_2,r_3>0$, $b>1$, $0<a<1/[2(b-1)]$ and $0<h,k<1$,
the existence of traveling waves connecting
the predator-free state $(0,0,1)$ and the unique positive coexistence state were obtained in \cite{gnow20} for the case of classical diffusion;
and in \cite{ylw21} for the case of nonlocal dispersal. Note that the existence of waves is not restricted to the case of equal diffusivities.
Applying Theorem~\ref{th:general} with the same choice of $\{\sigma_i\}$ as in \cite{gs23}, the stability
with initial perturbation in $L^1(\R)\cap L^\infty(\R)$ for traveling waves of the nonlocal dispersal case with equal diffusivities can be derived.
Here we have $R=\max\{r_1(b-1),r_2(b-1),r_3\}$, {since $0\le\phi_3\le 1$ and $0\le\phi_1,\phi_2\le b-1$. Moreover, we have}
\beaa
c^*:=\inf_{0<\lambda<\hat\lambda}\frac{\left[ \int_{\mathbb{R}}J(y)e^{\lambda y}dy-1\right] + \max\{r_1,r_2\}(b-1)}{\lambda},
\eeaa
and the stability holds for any wave with speed $c\ge c_R$. Note that $c_R=c^*$, if we assume that $r_3\le \max\{r_1,r_2\}(b-1)$.
Since it is quite similar to \cite[Theorem 3.7]{gs23}, we safely omit the further details here.

Similarly, for the predator-prey system with two weak competing preys $u_1,u_2$ and one predator $u_3$
with the nonlinearities in \eqref{RD} defined by
\beaa
\bss
f_1(u_1,u_2,u_3)=r_1(1-u_1-hu_2-au_3),\\
f_2(u_1,u_2,u_3)=r_2(1-ku_1-u_2-au_3),\\
f_3(u_1,u_2,u_3)=r_3(-1+bu_1+bu_2-u_3),
\ess
\eeaa
where $r_1,r_2,r_3,a,b>0$ (with some further restrictions on $a,b$) and $0<h,k<1$, we can obtain a stability result
described in Theorem~\ref{th:general} for the traveling waves connecting the predator-free state to the unique positive coexistence state
derived in \cite{tyj} for the nonlocal dispersal system with equal diffusivities.
Here the predator-free state is $(u_p,v_p,0)$, where
\beaa
u_p:=\frac{1-h}{1-hk}\in(0,1),\quad v_p:=\frac{1-k}{1-hk}\in(0,1),
\eeaa
the critical wave speed $c^*$ is defined by
\beaa
c^*:=\inf_{0<\lambda<\hat\lambda}\frac{\left[ \int_{\mathbb{R}}J(y)e^{\lambda y}dy-1\right] + r_3[b(u_p+v_p)-1]}{\lambda},
\eeaa
and the constant $R=\max\{r_1,r_2,r_3(2b-1)\}$, {using $0\le\phi_1,\phi_2\le 1$ and $0\le\phi_3\le 2b-1$}.
Similar to \cite{gs23}, we can only obtain the stability result described in Theorem~\ref{th:general} for the nonlocal dispersal system with equal diffusivities for any wave with speed {$c\ge c_R$}, {where $c_R>c^*$, since $R\ge r_3(2b-1)>r_3[b(u_p+v_p)-1]$.}

Thirdly, in \cite{zyl22}, they considered a predator-prey system with two preys (without inter-specific competition between preys)
and one predator with/without intra-specific competition, namely, the nonlinearities in \eqref{RD} are defined by
\beaa
\bss
f_1(u_1,u_2,u_3)=r_1(1-u_1-a_1u_3),\\
f_2(u_1,u_2,u_3)=r_2(1-u_2-a_2u_3),\\
f_3(u_1,u_2,u_3)=r_3(-1+b_1u_1+b_2u_2-\gamma u_3),
\ess
\eeaa
where $r_1,r_2,r_3,a_1,a_2,b_1,b_2$ are positive constants with $b_1+b_2>1$ and $\gamma$ is a nonnegative constant to denote whether there is the intra-specific competition in the predator.
It is clear that $\mathcal{I}:=[0,1]\times[0,1]\times[0,b_1+b_2-1]$ is an invariant set of this predator-prey system.
By choosing
\beaa
\sigma_1=\frac{1}{r_1},\; \sigma_2=\frac{a_1b_2}{r_2a_2b_1},\; \sigma_3=\frac{a_1}{r_3b_1},
\eeaa
the quantity $I$ in \eqref{ci} can be computed as
\beaa
I=-(u_1-\phi_1)^2-\frac{a_1b_2}{a_2b_1}(u_2-\phi_2)^2-\gamma\frac{a_1}{b_1}(u_3-\phi_3)^2\le 0,\;\forall\, (u_1,u_2,u_3),(\phi_1,\phi_2,\phi_3)\in\mathcal{I}.
\eeaa
Hence \eqref{sub} holds with $R:=\max\{r_1,r_2,r_3(b_1+b_2-1)\}$.
Therefore, the stability of traveling waves connecting the predator-free state $(1,1,0)$ and the unique positive coexistence state with speed $c\ge c_R$
for this predator-prey system with nonlocal dispersal of equal diffusivities (cf. \cite{zyl22} for the {existence of traveling waves})
follows from Theorem~\ref{th:general}.
The stability holds for any $c\ge c^*$, where
\beaa
c^*:=\inf_{0<\lambda<\hat\lambda}\frac{\left[ \int_{\mathbb{R}}J(y)e^{\lambda y}dy-1\right] + r_3(b_1+b_2-1)}{\lambda},
\eeaa
if we further assume that $r_i\le r_3(b_1+b_2-1)$ for $i=1,2$.

\subsection{An epidemic model}\hfil\\

{Finally, we present in this section a non-cooperative system arising in epidemiology as follows.}
In \cite{yllw13,yl18}, they considered a nonlocal dispersal Kermack-McKendrick epidemic model
described by \eqref{RD} with
\beaa
f_1(u_1,u_2)=-\beta u_2,\; f_2(u_1,u_2)=\beta u_1-\gamma,
\eeaa
where $u_1$ is the susceptible population, $u_2$ is the infective population,
and $\beta,\gamma$ are positive constants which stand for the infection rate and the removal rate, respectively.
Note that this system has a family of constant stationary solutions $\{(s,0)\mid s>0\}$.

Let $s^{*}>0$ be a fixed constant such that $\beta s^{*}>\gamma$.
Then, under the assumption that $J$ is compactly supported, the existence of traveling wave connecting $(s^*,0)$ and $(s_0,0)$
for some $s_0\le s^*$ for system \eqref{RD} with equal diffusivities was obtained {in} \cite{yllw13} for $c\in(0,c^*)$ and {in}
\cite{yl18} for $c=c^*$, where the quantity
\begin{equation*}
c^{\ast }:=\inf_{0<\lambda<\infty}\frac{\left[ \int_{\mathbb{R}}J(y)e^{\lambda y}dy-1\right] + \beta s^{*}-\gamma}{\lambda }
\end{equation*}
is well-defined and $c^*>0$.

It is easy to see that $[0,s^*]\times[0,\infty)$ is an invariant set of this epidemic model.
By choosing $\sigma_1=\sigma_2=1$, the quantity $I$ in \eqref{ci} {is identically equal} to zero.
Hence \eqref{sub} holds with $R=\beta s^{*}-\gamma$.
We conclude from Theorem~\ref{th:general} that the stability of these traveling waves in the sense described in Theorem~\ref{th:general}
holds for all $c\ge c^{*}$.



\bigskip

\noindent
{\bf Declarations of interest: none.}

\bigskip


\begin{thebibliography}{99}

\bibitem{al} M. Alfaro,
{\em Fujita blow up phenomena and hair trigger effect: The role of dispersal tails},
{Ann. Inst. H. Poincar\'{e} Anal. Non Lin\'{e}aire}, \textbf{34} (2017), 1309-1327.


\bibitem{and} F. Andreu-Vaillo, J.  Maz\'{o}n, J. Rossi, J. Toledo-Melero,
Nonlocal diffusion problems, Mathematical Surveys and Monographs, 165, American Mathematical Society, Providence, RI, 2010.

\bibitem{ar1} D.G. Aronson, H.F. Weinberger, {\em Nonlinear diffusion in population genetics, combustion, and nerve pulse propagation}, in: J.A. Goldstein(Ed.),
Partial Differential Equations and Related Topics, in: Lecture Notes in Math., vol. 446, Springer, Berlin, 1975, pp. 5-49.

\bibitem{peter}   P.W. Bates, {\em On some nonlocal evolution equations arising in materials science}, In: {Nonlinear dynamics
and evolution equations} (Ed. by H. Brunner, X. Zhao and X. Zou), pp. 13-52, Fields Inst. Commun., 48, AMS, Providence,  2006.


\bibitem{coville08} J. Coville, J. D\'avila, S. Martinez, {\em Existence and uniqueness of solutions to a nonlocal equation with monostable nonlinearity},
SIAM J. Math. Anal., \textbf{39} (2008),  1693-1709.

\bibitem{coville13} J. Coville, J. D\'avila, S. Martinez, {\em Pulsating fronts for nonlocal dispersion and KPP nonlinearity},
Ann. Inst. H. Poincar\'e Anal. Non Lin\'eaire, {30} (2013), 179-223.

\bibitem{dot79}
P. De Mottoni, E. Orlandi, A. Tesei,
{\it Asymptotic behavior for a system describing epidemics with migration and spatial spread of infection},
Nonlinear Anal., 3 (1979),663-675.


\bibitem{dg19}
A. Ducrot, J.-S. Guo, G. Lin, S. Pan,
{\it The spreading speed and the minimal wave speed of a predator-prey system with nonlocal dispersal},
Z. Angew. Math. Phys., 70 (2019), Art. 146.

\bibitem{fb00}  W.F. Fagan, J. Bishop,  {\em Trophic interactions during primary succession: Herbivores
slow a plant reinvasion at Mount St. Helens},   {{Amer. Nat.,}} {155} (2000) 238-251.


\bibitem{fz14} J. Fang, X.-Q. Zhao, {\em Traveling waves for monotone semiflows with weak compactness},
 {SIAM J. Math. Anal.,} {46} (2014),  3678-3704.

\bibitem{fife} P. Fife, {\em Some nonclassical trends in parabolic and parabolic-like evolutions},
In: {Trends in Nonlinear Analysis} (Ed. by Edited by M. Kirkilionis, S. Kr$\ddot{o}$mker, R. Rannacher and F. Tomi), 153-191, Springer, Berlin, 2003.

\bibitem{gnow20}
J.-S. Guo, K.-I. Nakamura, T. Ogiwara, C.-C. Wu,
{\it Traveling wave solutions for a predator-prey system with two predators and one prey}, Nonlinear Anal. Real World Appl., 54 (2020), Art. 103111.



\bibitem{gs23}
J.-S. Guo, M. Shimojo,
{\it Convergence to traveling waves in reaction-diffusion systems with equal diffusivities}, J. Differential Equations, 375 (2023), 156-171.

\bibitem{hopf} L. Hopf, Introduction to Differential Equations of Physics, Dover, New York, 1948.

\bibitem{ir07}
L.I. Ignat, J.D. Rossi,
{\it A nonlocal convection-diffusion equation},
{J. Funct. Anal.,} {251} (2007), 399-437.


\bibitem{jz09}
Y. Jin, X.-Q.  Zhao,  {\em Spatial dynamics of a periodic population model with dispersal}, {{Nonlinearity,}} {22} (2009),  1167-1189.


\bibitem{ly14}
W.-T. Li, F.-Y. Yang,
{\it Traveling waves for a nonlocal dispersal SIR model with standard incidence}, J. Integral Equ. Appl., 26 (2014), 243-273.





\bibitem{ll08}
X.S. Li, G. Lin,
{\it Traveling wavefronts in nonlocal dispersal and cooperative Lotka-Volterra system with delays},
Appl. Math. Comput., 204 (2008), 738–744.


\bibitem{lly14}
Y. Li, W.-T. Li, F.-Y. Yang,
{\it Traveling waves for a nonlocal dispersal SIR model with delay and external supplies}, Appl. Math. Comput., 247 (2014), 723-740.

\bibitem{liang} X. Liang, X.-Q. Zhao, {\em Asymptotic speeds of spread and traveling waves
for monotone semiflows with applications}, {Comm. Pure Appl. Math.,} \textbf{60} (2007)  1-40.

\bibitem{lui1989} R. Lui. {\em Biological growth and spread modeled by systems of recursions. I. Mathematical theory},    {Math. Biosci.,}  {93} (1989),  269--295.

\bibitem{Lutscher} F. Lutscher, E. Pachepsky, and M. A. Lewis, {\em The effect of dispersal patterns
on stream populations}, SIAM J. Appl. Math., \textbf{65} (2005), 1305--1327.

\bibitem{mk03}
J. Medlock, M. Kot,
{\it Spreading disease: integrol-differential equations old and new}, Math. Biosci. 184 (2003), 201-222.

\bibitem{ow01} M.R. Owen, M.A. Lewis,  {\em How predation can slow, stop or reverse a prey invasion},
 {Bull. Math.  Bio.,} {63} (2001),  655-684.


\bibitem{tyj}
W.-Y. Tang, F.-Y. Yang, B.-E. Jiang,
{\it Minimal wave speed for a predator-prey system with nonlocal dispersal}, preprint.

\bibitem{wein} H.F. Weinberger, {\em Long-time behavior of a class of biological model},  {SIAM J. Math. Anal.,} {13} (1982),  353-396.

\bibitem{weinberger} H.F. Weinberger, M.A. Lewis, B. Li, {\em Analysis of linear determinacy for spread in cooperative models},
{J. Math. Biol.,}  {45} (2002),  183-218.

\bibitem{w21}
C.-C. Wu,
{\it On the stable tail limit of traveling wave for a predator-prey system with nonlocal dispersal}, Appl. Math. Lett., 113 (2021), 5 pp.




\bibitem{yl18}
F.-Y. Yang, W.-T. Li,
{\it Traveling waves in a nonlocal dispersal SIR model with critical wave speed},
J. Math. Anal. Appl. 458 (2018), 1131–1146.


\bibitem{ylw20}
F.-Y. Yang, W.-T. Li, J.-B. Wang,
{\it Wave propagation for a class of non-local dispersal non-cooperative systems}, Proc. Roy. Soc. Edinburgh Sect. A, 150 (2020), 1965-1997.


\bibitem{ylw21}
F.-Y. Yang, W.-T. Li,  R.-H. Wang,
{\it Invasion waves for a nonlocal dispersal predator-prey model with two predators and one prey}, Commun. Pure. Appl. Anal., 20 (2021), 4083–4105.

\bibitem{ylw15}
F.-Y. Yang, ,W.-T. Li,  Z.-C. Wang,
{\it Traveling waves in a nonlocal dispersal SIR epidemic model},
Nonlinear Anal. Real World Appl., 23 (2015), 129–147.

\bibitem{yllw13}
 F.-Y. Yang, Y. Li, W.-T. Li, Z.-C. Wang,
 {\it Traveling waves in a nonlocal dispersal Kermack-McKendrick epidemic model},
 Discr. Contin. Dyn. Syst. Ser. B, 18 (2013), 1969–1993.

\bibitem{yp18}
Z. Yu, J. Pei,
{\it Stability of traveling wave fronts for a cooperative system with nonlocal dispersal},
Japan J. Industrial Appl. Math., 35 (2018), 817-834.

\bibitem{zll09} G.-B. Zhang, W.-T. Li, G. Lin,
{\it Traveling waves in delayed predator-prey systems with nonlocal diffusion and stage structure}, {{Math. Comput. Model.,}} {49} (2009), 1021-1029.

\bibitem{zlw12}
G.-B. Zhang, W.-T. Li, Z.-C. Wang,
{\it Spreading speeds and traveling waves for nonlocal dispersal equations with degenerate monostable nonlinearity}, J. Diff. Eq. 252 (2012), 5096–5124.

\bibitem{zyl22}
X.-D. Zhao, F.-Y. Yang, W.-T. Li,
{\it Traveling waves for a nonlocal dispersal predator-prey model with two preys and one predator},
Z. Angew. Math. Phys., 73 (2022), work No. 124, 29 pp.


\end{thebibliography}
\end{document}